\theoremstyle{plain}
\newtheorem{theorem}{Theorem}[section]
\newtheorem{lemma}[theorem]{Lemma}
\newtheorem{corollary}[theorem]{Corollary}
\newcommand\cA{\mathcal{A}}
\newcommand{\cB}{{\mathcal B}}
\newcommand\cS{\mathcal{S}}
\newcommand{\cX}{{\mathcal X}}
\newcommand{\bfx}{{\bf x}}
\newcommand{\bfy}{{\bf y}}
\newcommand\Z{\mathbb{Z}}
\newcommand\R{\mathbb{R}}
\renewcommand{\P}{{\bf P}}
\renewcommand{\le}{\leqslant}
\renewcommand{\ge}{\geqslant}
\author[S. Donderwinkel]{Serte Donderwinkel}
\address{University of Groningen, 
Bernoulli Institute for Mathematics, 
Computer Science and AI, 
and CogniGron (Groningen Cognitive Systems and Materials Center)}
\email{s.a.donderwinkel@rug.nl}
\author[B. Kolesnik]{Brett Kolesnik}
\address{University of Warwick, Department of Statistics}
\email{brett.kolesnik@warwick.ac.uk}
\keywords{entropic repulsion; 
infinite divisibility;
majorization; 
persistence probability; 
random polymer;
random walk;
renewal sequence;
wetting model}
\subjclass[2010]{05A15;	
05A17; 	
05C20;	
60E07;	
60G50;	
60K05;	
82B41; 	
82D60}	
\begin{document}

\title[Sina\u{\i} excursions]
{Sina\u{\i} excursions: 
An analogue of Sparre Andersen's formula for the area process of a random walk}

\begin{abstract} 
Sina\u{\i} initiated the study of 
random walks with 
persistently positive area processes, 
motivated by 
shock waves in solutions to the 
inviscid Burgers' equation. 
We find the 
precise asymptotic probability 
that the area process 
of a random walk bridge is an excursion. 
A key ingredient is an analogue of 
Sparre Andersen's classical formula. 
The asymptotics are related to 
von Sterneck's subset counting formulas 
from additive number theory. 
Our results 
sharpen bounds by 
Aurzada, Dereich and Lifshits
and 
respond to a question of 
Caravenna and Deuschel, 
which arose in their study of the wetting model. 
In this context, Sina\u{\i} excursions
are a class of random polymer chains 
exhibiting entropic repulsion. 
\end{abstract}

\maketitle

\section{Introduction}\label{S_intro}

\subsection{Persistence
probabilities}

There is a rich history 
of calculating {\it persistence
probabilities} in mathematics, wherein
we ask for a random process to 
continue to satisfy some property of interest.
For instance, 
Bertrand's ballot theorem \cite{ABR08} from 1887 can be 
viewed as an early example. 
No less famous is 
Sparre~Andersen's \cite{SpaAnd54}
formula from 1954, concerning the probability 
that a simple random walk remains positive. 

Applications of persistence in statistical physics began
in the late 1980s, as it relates to, e.g., fluctuating interfaces 
and sticky particle systems. We refer 
to the surveys  \cite{Maj99,CDPCMDS04,BMS13,AS15}
for a detailed overview of existing results and applications. 

In this work, we introduce 
an analogue of Sparre~Andersen's formula 
for the {\it area process} of a random walk; see \eqref{E_SA54} and 
Theorem \ref{T_SAforSE} below. 

Persistence probabilities for such 
area processes were first studied by Sina\u{\i} \cite{Sin92}, 
in relation to   
the inviscid Burgers' equation. 
This equation models
a turbulent fluid  
which gives rise to {\it shock waves}.
Such discontinuities are overcome  mathematically 
via {\it control surfaces,} i.e., interfaces  through which material flows from one side of a discontinuity to the other. 
When the system is started with self-similar, 
Brownian 
data, the exponent $1/4$ in 
\eqref{E_Sin92} below 
is related to the fact that the 
set of initial positions of 
particles not yet ``shocked'' by time $t=1$
has Hausdorff dimension $1/2$; see 
\cite{AS15,Sin92b,Ber98,Han93}. 

Times when the random walk and its area
process revisit 0 are renewal times
for such persistence problems. 
We call the trajectories between such times
{\it Sina\u{\i} excursions;} see 
Section \ref{S_SEs} below. 
These excursions play a key role 
in the work of 
Caravenna and Deuschel \cite{CD08}
on the {\it wetting model}. 
In this context, Sina\u{\i} excursions are related to random polymer
chains exhibiting a phenomenon referred to as {\it entropic repulsion}. 
More specifically, the area process of a Sina\u{\i} excursion
is used to model the 
interface that forms 
between a gas pressed 
diffusively 
against a surface 
by a liquid;
see \cite{CD08,AS15,Bol00,Vel06,Gia07} for more 
details. 

Our main result Theorem \ref{T_main}
(see also Corollary \ref{cor:meander})  
identifies the precise asymptototic
probability that a random walk is a 
Sina\u{\i} excursion. This 
sharpens the original bounds (up to polylogarithmic factors) 
by Caravenna and Deuschel \cite{CD08}
and the subsequent improvement (up to constant factors) by 
Aurzada, Dereich and Lifshits\cite{ADL14}. 
Our proof utilizes 
our area analogue of Sparre~Andersen's formula mentioned above, 
old formulas from additive number theory proved in the early 
1900s by von Sterneck \cite{Bac02}, 
and the Tauberian 
theorems related to the L\'evy--Khintchine formula proved by
Hawkes and Jenkins \cite{HJ78}.

\subsection{Sina\u{\i} walks}

Let $(S_k:k\ge0)$ be a 
simple symmetric random walk 
on the integers $\Z$
started at $S_0=0$. 
We let 
$A_k=\sum_{i=1}^k S_i$ denote its 
cumulative area after $k$ steps. 
Sina\u{\i} \cite{Sin92} proved that 
\begin{equation}\label{E_Sin92}
\P(A_1,\ldots,A_{n}\ge0)=\Theta(n^{-1/4}).
\end{equation}

If $A_1,\ldots,A_{n}\ge 0$ holds, 
we call $(S_0,S_1,\ldots,S_n)$
a {\it Sina\u{\i} walk}. 

As discussed by 
Aurzada and Simon \cite[Section 3]{AS15}, 
in their survey
on {\it persistence probabilities,}
Sina\u{\i}'s proof is based on 
the sequence of times 
$0=\tau_0,\tau_1\ldots$ that the walk visits 0. 
This gives
rise to another random walk, whose increments 
are the signed areas accumulated 
between these times. 
A key ingredient  
is Sparre Andersen's \cite{SpaAnd54}
classical result that, for $|x|\le 1$, 
\begin{equation}\label{E_SA54}
\sum_{n=0}^\infty \P(T_0>n) x^n 
=
\exp\left(\sum_{k=1}^\infty \P(S_k\le 0)\frac{x^k}{k}\right), 
\end{equation}
where $T_0=\inf\{t:S_t>0\}$
is the first time that the walk is positive. 

The utility of \eqref{E_SA54} lies in the fact that 
the probabilities 
$\P(S_k\le 0)$ are simpler than 
$\P(T_0>n)$.
A Tauberian theorem 
implies  
$\P(T_0>\tau_n)\sim c n^{-1/2}$. 
Finally, \eqref{E_Sin92} 
is derived using that
$n^{-2}\tau_n$ converges
to a stable random variable.

We note that 
Vysotsky \cite{Vys14} sharpened
and generalized 
Sina\u{\i}'s persistence   probability
\eqref{E_Sin92}, showing that 
\[
\P(A_1,\ldots,A_{n}\ge0)\sim Cn^{-1/4},
\]
for a wide class of random walks. 
The constant $C$, however, is expressed
in terms of a rather complicated integral
(see equation (37) therein). 
Perhaps our current arguments can help 
with finding a more explicit, combinatorial 
description of $C$, at least in some cases. 

More specifically, in \cite{Vys14} it is assumed that
the increments of the random walk are $\alpha$-stable, 
for some $1<\alpha\le 2$, 
and either right-exponential
or right-continuous (see \cite{Vys14} for the precise conditions). 
Subsequently, 
Dembo, Ding and Gao
\cite{BDW23} extended these results
to the case that the increments have finite, positive second moment. See also
Gao, Liu and Yang \cite{BDW23}
for the case of Gaussian increments. 

Finally, we note that continuous 
analogues
of such persistence problems have been studied very recently
by 
B\"{a}r, Duraj and Wachtel \cite{BDW23},
giving rise to a class of random processes
called
{\it Kolmogorov diffusions}.

\begin{figure}[h]
\centering
\includegraphics[scale=1]{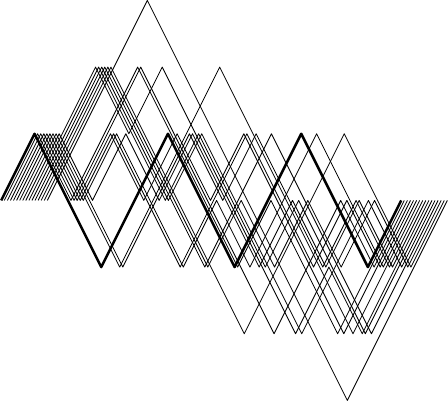}
\caption{The $16$ Sina\u{\i} 
excursions of length 
$12$, slightly staggered. 
Such excursions have total area $0$ and
non-negative partial areas. 
The standard Sina\u{\i} excursion in bold oscillates
between 
$\pm1$.
}
\label{F_SEs}
\end{figure}

\subsection{Sina\u{\i} excursions}
\label{S_SEs}

We call $(S_0,S_1,\ldots,S_{4n})$
a 
{\it Sina\u{\i} excursion} if it is a 
Sina\u{\i} walk and 
$S_{4n}=A_{4n}=0$.
Such {\it renewal times} are only possible
at multiples of $4$. 
We note that 
Sina\u{\i} excursions
are the discrete analogue of the    
{\it positive Kolmogorov excursions 
(from zero and back)}
studied in \cite{BDW23}.

Aurzada, Dereich and Lifshits  \cite{ADL14}
showed that 
\begin{equation}\label{E_ADL14}
p_n=\P(A_1,\ldots,A_{4n}\ge0\mid S_{4n}=A_{4n}=0)
=\Theta(n^{-1/2}),
\end{equation}
verifying a conjecture in Caravenna and Deuschel \cite{CD08}.

Our main result identifies the precise asymptotics. 

\begin{theorem}
\label{T_main}
As $n\to\infty$, 
\[
n^{1/2}p_n
\to 
\frac{1}{2}\sqrt{\frac{\pi}{6}}
\exp\left(\sum_{k=1}^\infty \frac{\Xi_k}{k2^{4k}}\right),
\]
where
\begin{equation}\label{E_vS}
\Xi_k=\frac{1}{4n}\sum_{d|2k}{4k/d-1\choose 2k/d}
\phi(d),
\end{equation}
and $\phi$ is Euler's totient function. 
\end{theorem}

Our methods also lead to the precise asymptotics of 
\begin{equation}\label{E_SinaiM}
\P(A_1,\ldots,A_{2n}\ge0\mid S_{2n}=0),
\end{equation}
corresponding to
a class of 
{\it Sina\u{\i} meanders}. 
See \cref{cor:meander}
below.

As we will discuss in Section \ref{S_vonS} below, 
$\Xi_n$ is the number of subsets 
of $\{1,2,\ldots,4n-1\}$ of size $2n$ that sum to $3n$
mod $4n$. This formula is 
one instance in a family of 
general modular subset
counting formulas proved by von Sterneck
in the early 1900s. 

To give a first hint about the connection 
between $p_n$ and $\Xi_n$, 
consider the $2n$ times  $t_1<\cdots<t_{2n}$ 
before down steps
in a 
walk $(S_0,S_1,\ldots,S_{4n})$ when 
$S_{t+1}-S_t=-1$. 
By Lemma \ref{L_area} below, if 
$A_{4n}=0$ then  
$\sum_{j=1}^{2n} t_j=n(4n-1)$. Thus, $\{t_1,\ldots,t_{2n} \}$ 
is a subset of $\{1,2,\ldots,4n-1\}$ of size $2n$ that sums to $3n$ mod $4n$.
More to the point, as we will see, 
$\Xi_n$ is related to the number of bridges that 
can be turned into a 
Sina\u{\i} excursion, by 
cyclically shifting their increments.

\subsection{A Sparre Andersen analogue}
To prove 
Theorem \ref{T_main}, we will first establish
the following 
analogue
of Sparre Andersen's formula \eqref{E_SA54}
for the probabilities 
\begin{equation}
\varphi_n
=\P(A_1,\ldots,A_{4n}\ge0,\: A_{4n}=S_{4n}=0). 
\end{equation}
By the local limit in \cite[Proposition 2.1]{ADL14}, 
it follows that 
\[
n^2\P(A_{4n}=S_{4n}=0)\to \frac{\sqrt{3}}{4\pi}. 
\]
Therefore, to prove Theorem \ref{T_main}, 
it suffices to show that
\begin{equation}
\label{E_wts}
n^{5/2}\varphi_n\to \frac{e^\lambda}{8\sqrt{2\pi}},
\end{equation}
where
\begin{equation}
\label{E_lam}
\lambda=\sum_{k=1}^\infty \frac{\Xi_k}{k2^{4k}}.
\end{equation}

For convenience, we put $\xi_n=\Xi_n/2^{4n}$
and let $\Phi_n=2^{4n}\varphi_n$ denote the number of
Sina\u{\i} excursions of length $4n$. 

\begin{theorem}
\label{T_SAforSE}
For $|x|\le1$, 
\begin{equation}\label{E_SA}
\sum_{n=0}^\infty \varphi_n x^n=
\exp\left(
\sum_{k=1}^\infty \xi_k \frac{x^k}{k}
\right).
\end{equation}
\end{theorem}

As with \eqref{E_SA54}, the usefulness of 
\eqref{E_SA} is that it allows for an indirect
analysis of the probabilities of interest 
$\varphi_n$. The criteria for  Sina\u{\i} excursions  
imposes conditions at all times along the trajectory. 
On the other hand, $\xi_n$
has a much 
simpler description, 
by von Sterneck's  
formulas (Lemma \ref{L_vonS} below).

\subsection{Transferring asymptotics}

To transfer asymptotic information 
from $\xi_n$ to $\varphi_n$, 
we will use a connection 
with {\it L\'evy processes}
$(L_t,t\ge0)$. 
We recall that a probability measure $\pi$ 
is infinitely divisible
if for all $m\ge1$ there are independent
and identically distributed $X_1,\ldots,X_m$
for which $X_1+\cdots+X_m\sim \pi$. 
L\'evy processes have  
independent, stationary increments, 
so the distribution of $L_t$ is infinitely divisible, 
at any given 
time $t>0$. 

The 
{\it L\'evy--Khintchine} formula 
relates an infinitely divisible 
$\pi$ to a certain   
{\it L\'evy measure} $\nu$, 
which controls the jumps in the associated 
L\'evy process $(L_t,t\ge0)$ such that $L_1\sim \pi$. 
In the case that 
$\pi=(p_n,n\ge0)$ is supported 
on the non-negative integers, 
we have that 
\begin{equation}\label{E_LK}
\sum_{n=0}^\infty p_n x^n=
\exp\left(
\sum_{k=1}^\infty (1-x^k)\nu_k, 
\right)
\end{equation}
where $(\nu_k,k\ge1)$
has finite total L\'evy measure
$\lambda=\sum_{k=1}^\infty \nu_k$. 
Furthermore, $\nu_k$ is the expected number of 
jumps of size $k$ by time $t=1$. 

Hence, by Theorem \ref{T_SAforSE}, it follows that 
$p_n=e^{-\lambda}\varphi_n$ is infinitely divisible, 
with corresponding L\'evy measure $\nu_n=\xi_n/n$. 
The constant $\lambda$ in \eqref{E_lam}, 
which appears in Theorem \ref{T_main}, 
is the total L\'evy measure. 
To complete the proof of Theorem \ref{T_main}, given 
Theorem \ref{T_SAforSE}, we will use the following result by 
Embrechts and Hawkes \cite{EH82}, 
which shows that $p_n\sim\nu_n$,
when $\nu_n$ is sufficiently regular. 

A probability distribution $(q_n,n\ge0)$ is 
{\it sub-exponential}
if $q_n/q_{n+1}\to1$ and its convolution
\[
q_n^*=\sum_{k=0}^n q_kq_{n-k}. 
\]
satisfies that  $q_n^*/q_n\to2$.
In \cite{EH82} it is proved that 
if $p_n$ and $\nu_n$ are related by \eqref{E_LK}, 
then $\nu_n/\lambda$ is sub-exponential if and only if 
$p_n\sim\nu_n$ and $\nu_n/\nu_{n+1}\to1$. 
Intuitively, this follows by the ``one big jump principle.''
A large value of $L_1$ is likely 
due to one big jump of essentially this value by time $t=1$. 
As a special case, 
Hawkes and Jenkins \cite{HJ78} 
showed that $p_n\sim\nu_n$ if $\nu_n$ 
is {\it regularly varying} with 
index $\gamma<-1$. 

Let us note that the essential 
feature of $\exp(z)$ in \eqref{E_LK}
is that it is analytic. 
Indeed, the results in 
\cite{EH82} are based on the work of 
Chover, Ney and Wainger \cite{CNW73} 
on analytic transformations of 
probability measures, and so extend 
to other analytic $f(z)$;
see Embrechts and Omey \cite{EO84}.

In the present case, 
by Stirling's approximation, 
\[
n^{5/2}\nu_n
=n^{3/2}\xi_n
=\frac{n^{3/2}}{2^{4n}}\Xi_n
\to \frac{1}{8\sqrt{2\pi}},
\]
since the term $d=1$ 
dominates in \eqref{E_vS}.
As such, $\nu_n$ is 
regularly varying with index $\gamma=-5/2$, 
and it follows that 
\[
n^{5/2}\varphi_n
=e^\lambda n^{5/2}p_n
\sim e^\lambda n^{5/2}\nu_n
\to \frac{e^\lambda}{8\sqrt{2\pi}},
\]
yielding \eqref{E_wts}. As discussed, 
Theorem \ref{T_main} follows.

\subsection{Outline}

We have shown how  
Theorem \ref{T_main} follows from 
Theorem \ref{T_SAforSE} and the asymptotics of $\Xi_n$.
The remainder of the article 
is devoted to the proof of 
Theorem \ref{T_SAforSE}.

\section{von Sterneck's formulas}
\label{S_vonS}

In the early 1900s, von Sterneck
(see, e.g., \cite{Bac02,Ram44})
found the 
number $\Lambda_k(n,s)$ 
of multi-sets 
$\{m_1,\ldots,m_k\}$ of $\{0,1,\ldots,n-1\}$ 
of size $k$ that sum to 
$\sum_{i=1}^k m_i\equiv s$ mod $n$. 

\begin{lemma}[von Sterneck]
\label{L_vonS}
For all $n\ge1$, we have that 
\[
\Lambda_k(n,s)
=\frac{1}{n}\sum_{d|k,n}{(n+k)/d-1\choose k/d}
\frac{\mu(d/\gcd(d,s)) \phi(d) }{\phi(d/\gcd(d,s))},
\]
where  
$\mu$ is the M\"obius function, 
$\phi$ is the Euler totient function,
and $\gcd(d,s)$ is the greatest common divisor 
of $d$ and $s$.
\end{lemma}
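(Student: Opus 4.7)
The plan is to prove \cref{L_vonS} by the classical finite Fourier (roots-of-unity) method applied to the multiset generating function; this will recover von Sterneck's identity in its equivalent Ramanujan-sum form.

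First, I would encode the objects generatingly. A size-$k$ submultiset of $[n-1]_0$ is a tuple $(a_0,\ldots,a_{n-1})\in\N^n$ with $\sum_j a_j=k$ and total sum $\sum_j ja_j$. Tracking size by $x$ and sum by $z$, the bivariate generating function factors as $\prod_{j=0}^{n-1}(1-xz^j)^{-1}$. With $\omega=e^{2\pi i/n}$, the roots-of-unity filter on the $z$-exponent modulo $n$ yields
\[
\Lambda_k(n,s)=\frac{1}{n}\sum_{j=0}^{n-1}\omega^{-js}\,[x^k]\prod_{m=0}^{n-1}\frac{1}{1-x\omega^{jm}}.
\]

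Second, I would collapse the inner product. For fixed $j$ with $\gcd(n,j)=n/d$, the values $\omega^{jm}$ for $m\in[n-1]_0$ traverse the $d$-th roots of unity, each with multiplicity $n/d$. Since $\prod_{\zeta^d=1}(1-x\zeta)=1-x^d$, the product simplifies to $(1-x^d)^{-n/d}$, and so $[x^k](1-x^d)^{-n/d}=\binom{(n+k)/d-1}{k/d}$ when $d\mid k$ and vanishes otherwise; in particular only $d\mid\gcd(n,k)$ contribute.

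Third, I would group the outer $j$'s by the associated divisor $d$. There are exactly $\phi(d)$ values of $j\in[n-1]_0$ with $\gcd(n,j)=n/d$, and for these $\omega^{-js}=e^{-2\pi i rs/d}$ as $r$ runs over the units modulo $d$. The resulting inner character sum is the Ramanujan sum $c_d(s)=\sum_{1\le r\le d,\,\gcd(r,d)=1}e^{-2\pi i rs/d}$, whose H\"older evaluation is $c_d(s)=\mu(d/g)\phi(d)/\phi(d/g)$ with $g=\gcd(d,s)$. Substituting into the outer sum recovers precisely the formula in the lemma.

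The main obstacle is pure bookkeeping: verifying the orbit count $\phi(d)$, the clean collapse of the inner product to $(1-x^d)^{-n/d}$, and the Ramanujan-sum evaluation. H\"older's identity itself is classical and can either be cited or derived in a line by M\"obius inverting the relation $\sum_{d\mid n}c_d(s)=n\cdot\mathbf{1}\{n\mid s\}$. No ingredient beyond finite Fourier analysis on $\Z/n\Z$ is needed.
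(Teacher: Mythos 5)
Your argument is correct: the multiset generating function $\prod_{j=0}^{n-1}(1-xz^j)^{-1}$, the mod-$n$ roots-of-unity filter, the collapse $\prod_{m=0}^{n-1}(1-x\omega^{jm})=(1-x^d)^{n/d}$ for $\omega^j$ of order $d$, the count of $\phi(d)$ values of $j$ with $\gcd(n,j)=n/d$, and H\"older's evaluation $c_d(s)=\mu(d/g)\phi(d)/\phi(d/g)$ assemble exactly into the stated formula, with the restriction $d\mid k$ emerging automatically from the coefficient extraction. Note, however, that the paper does not prove this lemma at all: it is quoted as a classical result of von Sterneck, with references to Bachmann, Ramanathan, Konvalinka--Reineke--Tewari and others, so there is no in-paper proof to compare against. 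Your derivation is essentially the standard classical one (it is in substance the Ramanujan-sum proof found in the cited literature, e.g.\ Ramanathan), so what your write-up buys is self-containedness rather than novelty; if it were to be included, the only points needing explicit verification are the ones you already flag as bookkeeping, and the H\"older identity could equally well be cited rather than rederived from $\sum_{d\mid n}c_d(s)=n\cdot\mathds{1}\{n\mid s\}$.
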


In particular, 
\[
\Lambda_{2n}(2n,0)
=\frac{1}{n}\sum_{d|2n}{4n/d-1\choose 2n/d}\phi(d).
\]
Therefore, to justify 
\eqref{E_vS} above, we prove the 
following claim.

\begin{lemma} 
\label{L_nor3n}
For all $n\ge1$, 
we have that 
\[
2\Xi_n=
\Lambda_{2n}(2n,0).
\]
\end{lemma}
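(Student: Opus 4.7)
My plan is to establish the identity via a direct counting argument that unfolds the multiset-with-repetition structure of $\Lambda_{2n}(2n,0)$ into honest subsets, and then uses a complementation involution to cut the result in half.

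First, I would encode each submultiset $\{m_1\le m_2\le \cdots \le m_{2n}\}\subset [2n-1]_0$ as a strictly increasing sequence via the standard stars-and-bars shift $a_i = m_i + (i-1)$. This gives a bijection between such submultisets and subsets $\{a_1<\cdots<a_{2n}\}\subset [4n-2]_0$ of size $2n$. The sum transforms as
\[
\sum_{i=1}^{2n} a_i \;=\; \sum_{i=1}^{2n} m_i \;+\; \binom{2n}{2} \;=\; \sum_{i=1}^{2n} m_i \;+\; n(2n-1).
\]
Hence $\sum m_i \equiv 0 \pmod{2n}$ is equivalent to $\sum a_i \equiv n(2n-1)\equiv -n \equiv n \pmod{2n}$. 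Translating $b_i = a_i+1$ gives a bijection to subsets $B\subset [4n-1]$ of size $2n$, and shifts the sum by $2n$, so the condition becomes $\sum b \equiv n \pmod{2n}$. Therefore $\Lambda_{2n}(2n,0)$ equals the number of $2n$-subsets of $[4n-1]$ whose sum is congruent to either $n$ or $3n$ modulo $4n$.

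Second, I would define the complementation involution $B\mapsto B^\ast:=\{4n-b:b\in B\}$ on $2n$-subsets of $[4n-1]$. Since $|B|=2n$, we have
\[
\sum_{b\in B^\ast} b \;=\; 2n\cdot 4n - \sum_{b\in B} b \;\equiv\; -\sum_{b\in B} b \pmod{4n},
\]
so this map exchanges the classes $\{\sum B\equiv n \pmod{4n}\}$ and $\{\sum B\equiv 3n\pmod{4n}\}$ bijectively. Each class therefore has cardinality $\Xi_n$, and combining with the previous step yields $\Lambda_{2n}(2n,0)=2\Xi_n$, as claimed.

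The argument is essentially routine once the two ingredients (the stars-and-bars shift and the complementation involution) are identified, and there is no real obstacle; the only thing to watch is keeping the arithmetic modulo $2n$ versus $4n$ straight, in particular verifying that the residues $n$ and $3n$ mod $4n$ are distinct (which uses $n\ge 1$) so that the involution genuinely partitions the count into two equal halves.
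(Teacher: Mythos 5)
Your proof is correct and follows essentially the same route as the paper: the shift $m_i\mapsto m_i+i$ (which you perform in two steps) converts submultisets of $[2n-1]_0$ summing to $0$ mod $2n$ into $2n$-subsets of $[4n-1]$ summing to $n$ or $3n$ mod $4n$, and the complementation $b\mapsto 4n-b$ shows the two residue classes are equinumerous. The only differences from the paper's argument are cosmetic bookkeeping in the modular arithmetic.
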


\begin{proof}
First, we note that, for integers
$1\le a_1<\cdots<a_{2n}\le 4n-1$, 
we have 
$\sum_{i=1}^{2n} a_i\equiv n$ mod $4n$ if and only if 
$\sum_{i=1}^{2n} (4n-a_i)\equiv 3n$ mod $4n$. 
Hence, there is the same number of 
subsets of $\{1,2,\ldots,4n-1\}$
of size $2n$ that sum to $n$
mod $4n$ as there are that sum to $3n$
mod $4n$. 

Next, we claim that, 
for integers 
$0\le m_1\le\cdots\le m_{2n}\le 2n-1$, 
we have that $\sum_{i=1}^{2n} m_i$ 
is equal to $0$ mod $2n$
if and only if 
$\sum_{i=1}^{2n} (m_i+i)$ is equal to 
$n$ or $3n$ mod $4n$. 
Indeed, to see this, 
simply note that 
$
\sum_{i=1}^{2n}i=n(2n+1) 
$
is equal to $n$ or $3n$
mod $4n$ (depending on the parity of $n$). 
This implies that the sub-multisets of $\{1,\dots, 2n-1\}$ 
of size $2n$ that sum to $0$ mod $2n$ are in bijection 
with the subsets of $\{1,\dots, 4n-1\}$ of size 
$2n$ that sum to $n$ or $3n$ mod $n$, and the statement follows. 
\end{proof}

\section{Times before down steps}

Consider a bridge $\cB=(B_0,B_1,\ldots,B_{2n})$ 
of length $2n$.
That is, $B_0=B_{2n}=0$ and 
all increments 
$\Delta B_k=B_{k+1}-B_k=\pm1$, 
for $0\le k\le2n-1$. 
Let 
\[
{\bf t}(\cB)=(t_1,\ldots,t_{n})
\]
denote the sequences 
of {times before down steps,}
that is, times $0\le  t_1<\dots<t_n\le 2n-1$ 
such that 
$\Delta B_t=-1$. 

\begin{lemma}
\label{L_area}
Let $\cB=(B_0,B_1,\ldots,B_{4n})$  be a bridge
with times ${\bf t}(\cB)=(t_1,\ldots,t_{2n})$ 
before down steps. 
Then its total area 
\begin{equation}\label{E_area}
A_{4n}=\sum_{k=1}^{4n}B_k
=-2n(4n-1)+2\sum_{i=1}^{2n}t_i.
\end{equation}
\end{lemma}

\begin{proof}
To see this, note that 
\[
\sum_{k=1}^{4n}B_k
=
\sum_{k=0}^{4n-1}(4n-k)\Delta B_k
=\sum_{k=0}^{4n-1}(4n-k)
-2\sum_{j=1}^{2n}(4n-t_j),
\]
where the last step follows from $\Delta B_k=1-2\mathbf{1}{\{k\in \{t_1,\dots,t_{2n}\}\}}$.
This simplifies to 
$-2n(4n-1)+2\sum_{i=1}^{2n}t_i$,
as claimed. 
\end{proof}

We let  
\[
\cS=(0,1,0,-1,0\ldots,0,1,0,-1,0)
\]
denote the {\it standard Sina\u{\i} excursion}
of length $4n$; see Figure \ref{F_SEs}. 
Note that $\cS$
is a ``sawtooth'' bridge,  oscillating 
between $\pm1$, with 
\begin{equation}\label{E_tstar}
{\bf t}(\cS)
=(1,2,5,6,\ldots,4n-3,4n-2).
\end{equation}

Note that ${\bf t}(\cS)$ sums to 
$n(4n-1)$. Therefore, if $\cB$ is a 
Sina\u{\i} excursion then, by 
Lemma \ref{L_area}, 
${\bf t}(\cB)$ and ${\bf t}(\cS)$
have the same sum. 

In fact, it can be shown that 
$\cB$ is a 
Sina\u{\i} excursion if and only if 
${\bf t}(\cB)$ is {\it majorized} 
by ${\bf t}(\cS)$. 
(For weakly increasing $\bfx,\bfy\in\R^n$, 
$\bfx$ is majorized by $\bfy$
if all partial sums 
$\sum_{i=1}^k x_i\ge \sum_{i=1}^k y_i$, 
with equality when $k=n$.)
Intuitively, $\cS$ takes its down steps
as soon as possible, maintaining a cumulative
area $\ge0$ as close to 0 as possible. 
We will not require this fact, and omit
the details.

\section{Sparre Andersen for Sina\u{\i} excursions}
\label{E_SAforSE}

In this section, we prove 
Theorem \ref{T_SAforSE}, 
which states that 
\[
\sum_{n=0}^\infty \varphi_n x^n=
\exp\left(
\sum_{k=1}^\infty \xi_k \frac{x^k}{k}
\right).
\]
By differentiating and comparing coefficients, 
it can be seen that this is equivalent to 
\[
n\varphi_n=\sum_{k=1}^n \xi_k \varphi_{n-k}.
\]
Therefore, multiplying by $2^{4n}$, 
to prove Theorem \ref{T_SAforSE}  
it suffices to
show 
\begin{equation}\label{E_PhiXi_rec}
n\Phi_n=\sum_{k=1}^n \Xi_k \Phi_{n-k}.
\end{equation}

For this, we use the observation that $\Phi_n$ is a  
\emph{renewal sequence}. A renewal sequence enumerates 
structures that can be decomposed into a series of irreducible parts. 
Formally, $A_n$ is a renewal sequence if its generating function 
$A(x)=\sum_{n=0}^\infty A_n x^n$ can be expressed as 
\[A(x)=\sum_{m=0}^\infty [A^{(1)}(x)]^m=\frac{1}{1-A^{(1)}(x)}\]
where $A^{(1)}(x)=\sum_{n=1}^\infty A_n^{(1)}x^n$ is the generating 
function for the number $A_n^{(1)}$ of irreducible structures of length $n$. 
See, e.g., Feller \cite{F68} for details. 

We see that $\Phi_n$ is a renewal sequence because any
Sina\u{\i} excursion can 
be decomposed into a series of  \emph{ irreducible Sina\u{\i} excursions} 
whose area process only takes the value $0$ at the first and the last step.
More specifically, 
let $\Phi_n^{(1)}$ be the number of
walks $(S_0,S_1,\ldots,S_{4n})$ 
for which $S_0=A_0=0$, $S_{4n}=A_{4n}=0$
and $A_1,\ldots,A_{4n-1}>0$. 
Then, the generating function $\Phi(x)=\sum_n \Phi_n x^n$ of $\Phi_n$ 
can be expressed as   
\[
\Phi(x)
=\frac{1}{1-\Phi^{(1)}(x)},
\]
where  
$\Phi^{(1)}(x)=\sum_n \Phi_n^{(1)} x^n$ is the generating function 
for the number $\Phi^{(1)}_n$ of irreducible Sina\u{\i} excursions.

Then, the key to proving \eqref{E_PhiXi_rec}
is the following lemma, proved by Bassan and the authors in \cite{BDK25}.

\begin{lemma}[Lemma 2 (1) in \cite{BDK25}] \label{lem:renewal_mark}
Suppose that $1=A_0, A_1,\ldots$ 
is a renewal sequence, 
where $A_n$ counts the number of objects in a
class $\cA_n$ of objects of size $n$. Then 
\begin{equation}\label{E_ren_rec}
nA_n = \sum_{k=1}^n A_k' A_{n-k}, 
\end{equation}
where $A_n'$ is the number of pairs $(X,s)$,
where $X\in\cA_n$ 
and $0\le s<\ell$ is an integer, where $\ell$ 
is the size of the first irreducible part of $X$.
\end{lemma}
Its proof is simple, so we include it here. 
\begin{proof}
Note that $nA_n$ enumerates the pairs $(X,i)$,
where $X\in\cA_n$ 
and $0\le i<n$ is an integer. We call such pairs 
\emph{marked objects} of size $n$.
For a marked object $(X,i)$ of size $n$, consider the sub-object 
consisting of the irreducible part
containing $i$ and all subsequent parts.  
This sub-object is of size $k$, for some $1\le k \le n$, 
and has a mark somewhere in its first irreducible part. 
All other previous parts form an (unmarked) 
object of size $n-k$. This implies that there are 
$\sum_{k=1}^n A_k' A_{n-k} $
many such pairs $(X,i)$. 
See Figure \ref{F_Aprime}. 
\end{proof}

\begin{figure}[h]
\centering
\includegraphics[scale=1]{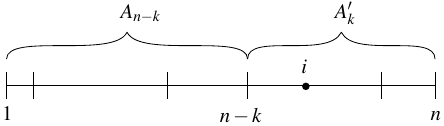}
\caption{Marked objects
of size $n$ ($nA_n$ many) 
can be split into an object 
of size $n-k$ ($A_{n-k}$ many) and an object of size $k$
with a mark in its first irreducible part ($A_k'$ many). 
Vertical lines delimit irreducible parts. 
Recurrence 
\eqref{E_ren_rec} 
follows, summing over $k$.
}
\label{F_Aprime}
\end{figure}

Lemma \ref{lem:renewal_mark} and the observation that $\Phi_n$ 
is a renewal sequence imply the following lemma 
(recalling that renewal times when
$S_{4k}=A_{4k}=0$ 
are only possible at multiples of $4$). 

\begin{lemma}
Let $\Phi'_n$ denote the number of pairs $(\cB,s)$, where $\cB$ is a 
Sina\u{\i} excursion of length $4n$ and $0\le s < \ell $ is an integer, 
where $\ell$ the length of the first irreducible Sina\u{\i} excursion of $\cB$. Then,  
\[n\Phi_n = \sum_{k=1}^n \Phi_k' \Phi_{n-k}. \]
\end{lemma}

Finally, to prove \eqref{E_PhiXi_rec} and thereby complete the proof of
Theorem \ref{T_SAforSE}, 
we show the following.

\begin{lemma}\label{L_XiX}
For all $n\ge1$, we have that $\Phi_n'=\Xi_n$. 
\end{lemma}

\begin{proof}
In fact, it will be easier to show that 
$2\Phi'_n=2\Xi_n$. By 
Lemma \ref{L_nor3n},
this is 
the number of subsets of $\{1,2,\ldots,4n-1\}$ of size $2n$
that sum to $n$ or $3n$ mod $4n$. 
To do this, we will find a bijection $\Upsilon$
\begin{itemize}
\item from the set of all pairs $(\cB,j)$, where $\cB$ is a 
Sina\u{\i} excursion of length $4n$, with first 
positive Sina\u{\i} excursion
of length $\ell=4k$, and $1\le j\le 4k$ is an integer, 
\item to the set of all of subsets $T$ 
of $\{1,2,\ldots,4n-1\}$
of size $2n$ 
that sum to $n$ or $3n$ mod $4n$. 
\end{itemize}

To describe $\Upsilon$, 
consider  $\cB$ as above. 
Let 
\[
0=i_1<\cdots< i_{2k}= 4k-1
\]
be the times before {\it up} steps in the first positive 
Sina\u{\i} excursion of $\cB$. 
Let $\cB^{(j)}$ be the bridge obtained from $\cB$ 
by cyclically shifting 
$\cB$ to the left by $i_j$. 
In other words, the $k$th increment of $\cB^{(j)}$ is
the $(k+i_j)$th (understood mod $4n$) increment of $\cB$. 
In particular, $\cB^{(1)}=\cB$. 
We let 
$\Upsilon(\cB,j)$ to be the set of 
times before {\it down} steps
in $\cB^{(j)}$. 
Lemma \ref{L_area} implies that, because  $\cB$ is a  
Sina\u{\i} excursion, its times before down steps
sum to $n(4n-1) \equiv 3n$ mod $4n$.
We shift all of these $2n$ times 
by the same amount
to obtain the times before down steps in $\cB^{(j)}$, 
so these sum to $n$ or $3n$ mod $4n$. 
See Figure \ref{F_bij} for an example. 

Finally, let us describe $\Upsilon^{-1}$. 
Let $T$ be as above. Consider the 
bridge $\cX$ with times before down steps 
at times 
$t\in T$. 
Then, by Lemma \ref{L_area}, 
the total area $A$ of $\cX$ is equal to $0$
mod $4n$. If we translate the $x$-axis
by some $\delta\in\Z$
the area of $\cX$, with respect to this 
new axis, is $A'=A-4\delta n$. 
Select the unique $\delta$ that sets $A'=0$. 
To find $\Upsilon^{-1}(T)$,
we choose the rightmost point 
before an up step 
in $\cX$ along this new axis 
for which the corresponding cyclic shift forms a Sina\u{\i} excursion
(with respect to this new axis). 
Since the total area is 0, such a point 
exists by 
Raney's lemma \cite{Ran60}. 
See Figure \ref{F_inv} for an example.
If this point occurs at time $m$ we set $\cB_i=\cX_{i+m}$ 
(with indices modulo $4n$) and we let $j$ be the index 
such that the $j$th up step in $\cB$ occurs at time $4n-m$.
\end{proof}

\begin{figure}[h]
\centering
\includegraphics[scale=0.8]{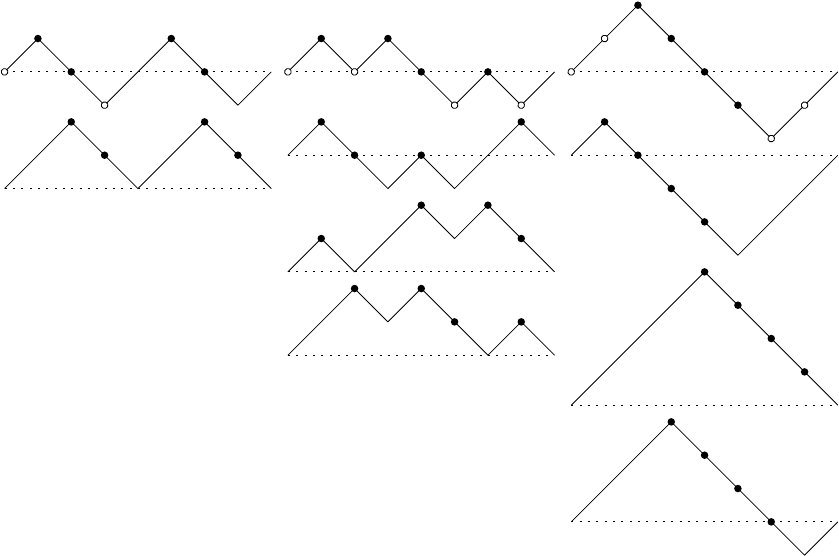}
\caption{{\it 1st row:} 
Times before down steps 
1256, 1346 and 2345
are solid dots 
in the $\Phi_2=3$ 
Sina\u{\i} excursions of length $8$. 
Times before up steps 
03, 0257 and 0167 
in their first positive 
excursions are open dots. 
The bijection $\Upsilon$ gives the $2\Xi_2=10$
subsets of 
$\{1,2,\ldots,7\}$
of size $4$ that
sum to $2$ or $6$ mod $8$ as follows. 
{\it 1st column:} 
If we shift the first Sina\u{\i} excursion 
by starting at its 1st or 2nd 
open dot, we obtain bridges with times
before down steps (solid dots) at
times 1256 and 2367. 
{\it 2nd column:} 
If we shift the 2nd Sina\u{\i} excursion 
by starting at its 1st, 2nd, 3rd or 4th
open dot, we obtain bridges with times
before down steps 
at 1346, 1247, 1467 and 2457. 
{\it 3rd column:} 
If we shift the 3nd Sina\u{\i} excursion 
by starting at its 1st, 2nd, 3rd or 4th
open dot, we obtain bridges with times
before down steps
at 2345, 1234, 4567 and 3456. 
}
\label{F_bij}
\end{figure}

\begin{figure}[h]
\centering
\includegraphics[scale=0.8]{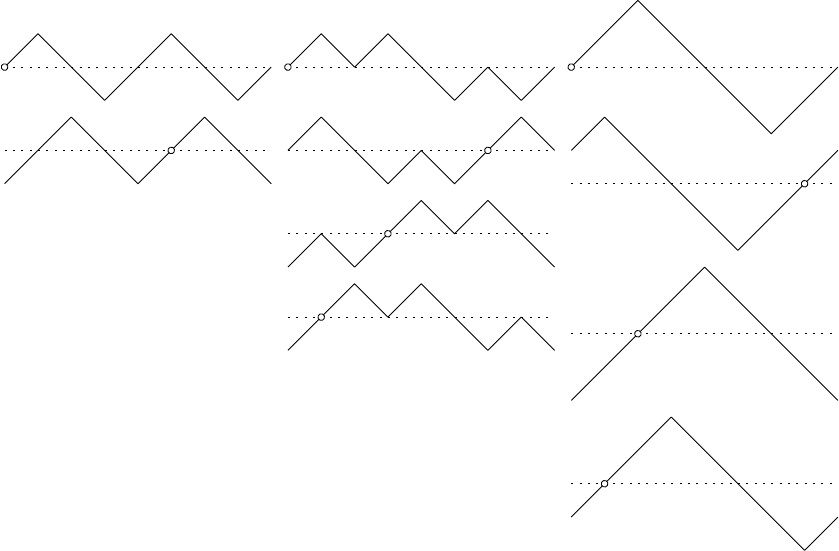}
\caption{To invert the bijection $\Upsilon$, 
we select the horizontal line (dotted) 
that ``cuts''
the area in half, and use Raney's lemma
to find the rightmost point (open dot) 
before an up step 
that starts a Sina\u{\i} excursion, 
with respect to this line. 
}
\label{F_inv}
\end{figure}

\section{A related application}

Finally, let us conclude with 
another, related application of
our current techniques. 

As a consequence of considerably more 
technical arguments
than those in the current article, 
we \cite[Corollary 4]{DK24a} 
recently 
proved that 
\begin{equation}\label{E_pn}
n^{1/2}p_n
\to
\frac{1}{2}\sqrt{\frac{\pi}{6}}
\frac{1}{1-\P(A_\tau=0)}, 
\end{equation}
where $\tau=\inf\{t:S_t=0,\: A_t\le 0\}$. 
Informally, $\tau$ is the first time that a random walk 
is ``at risk'' of not being Sina\u{\i} (i.e., if $A_\tau<0$).  
Theorem \ref{T_main} above implies that  
$\P(A_\tau=0)
=1-e^{-\lambda}$, 
where $\lambda$ is as in \eqref{E_lam}, 
so that 
\[
n^{1/2}p_n
\to 
\frac{1}{2}\sqrt{\frac{\pi}{6}}
e^\lambda.
\]

This, in turn, when combined with Proposition 6.3 in \cite{BDGJS22}, 
yields the following corollary,
concerning the 
probability that a random walk bridge 
of length $2n$ is a 
Sina\u{\i} walk.

\begin{corollary}
\label{cor:meander}
As $n\to\infty$,  it holds that
\[n^{1/4}\P(A_1,\ldots,A_{2n}\ge0\mid S_{2n}=0)
\to \frac{e^{\lambda/2}\sqrt{\pi}}{\Gamma(1/4)}.  
\]
\end{corollary}

\section{Acknowledgments}
SD acknowledges
the financial support of the CogniGron research center
and the Ubbo Emmius Funds (University of Groningen).
BK was partially supported by a 
Florence Nightingale Bicentennial Fellowship (Oxford Statistics)
and a Senior Demyship (Magdalen College).
BK gratefully acknowledges 
Vasu Tewari for indicating  
the work of von Sterneck.

\makeatletter
\renewcommand\@biblabel[1]{#1.}
\makeatother

\providecommand{\bysame}{\leavevmode\hbox to3em{\hrulefill}\thinspace}
\providecommand{\MR}{\relax\ifhmode\unskip\space\fi MR }
\providecommand{\MRhref}[2]{%
  \href{http://www.ams.org/mathscinet-getitem?mr=#1}{#2}
}
\providecommand{\href}[2]{#2}

\end{document}